\author{Brian Clarke}
\date{\today}
\title[The Metric Geometry of the Manifold of Riemannian Metrics]{The
  Metric Geometry of the Manifold of Riemannian Metrics over a Closed
  Manifold}
\thanks{This research was supported by the International Max Planck
  Research School \emph{Mathematics in the Sciences} and the DFG
  Research Training Group \emph{Analysis, Geometry and their
    Interaction with the Natural Sciences}.}
\address{Department of Mathematics, Stanford University, Stanford, CA
  94304-2125}
\email{\href{mailto:bfclarke@math.stanford.edu}{bfclarke@math.stanford.edu}}
\urladdr{\href{http://math.stanford.edu/~bfclarke/}{http://math.stanford.edu/${\sim}$bfclarke/}}
\begin{document}

\begin{abstract}
  We prove that the $L^2$ Riemannian metric on the manifold of all
  smooth Riemannian metrics on a fixed closed, finite-dimensional
  manifold induces a metric space structure.  As the $L^2$ metric is a
  weak Riemannian metric, this fact does not follow from general
  results.  In addition, we prove several results on the exponential
  mapping and distance function of a weak Riemannian metric on a
  Hilbert/Fréchet manifold.  The statements are analogous to, but
  weaker than, what is known in the case of a Riemannian metric on a
  finite-dimensional manifold or a strong Riemannian metric on a
  Hilbert manifold.
\end{abstract}

\maketitle

\section{Introduction}\label{sec:introduction}

This paper is the first in a pair studying the metric geometry of the
Fréchet manifold $\M$ of all $C^\infty$ Riemannian metrics on a fixed
closed, finite-dimensional, orientable manifold $M$.  This manifold
has a natural weak Riemannian metric called the \emph{$L^2$ metric},
which will be defined in Section \ref{sec:preliminaries}.  The main
result of the paper is the following.

\begin{thm*}
  $(\M, d)$, where $d$ is the distance function induced from the $L^2$
  metric, is a metric space.
\end{thm*}

This is indeed a nontrivial theorem, as the fact that the $L^2$ metric
is a weak (as opposed to strong) Riemannian metric implies that its
induced distance function is \emph{a priori} only a pseudometric.
That is, some points may have distance zero from one another.  The
first authors we know to have recognized this are Michor and Mumford.
In
\cite{michor05:_vanis_geodes_distan_spaces_of,michor06:_rieman_geomet_spaces_of_plane_curves},
they found examples of weak Riemannian metrics on Fréchet manifolds of
embeddings for which the distance between \emph{any} two points is zero.
They then constructed other weak Riemannian metrics that they proved
induce metric space structures on the manifolds (i.e., have
positive-definite distance functions).

The manifold of metrics and the $L^2$ metric have been of interest to
mathematicians and physicists for some time.  We first became
interested in their study because of their applications to Teichmüller
theory, developed by Fischer and Tromba \cite{tromba-teichmueller}.
If the base manifold $M$ is a Riemann surface of genus greater than
one, then the Teichmüller space of $M$ is diffeomorphic to $\Mhyp /
\DO$, where $\Mhyp \subset \M$ is the submanifold of hyperbolic
metrics (with constant scalar curvature $-1$) and $\DO$ is the group
of diffeomorphisms that are homotopic to the identity, acting by
pull-back.  The $L^2$ metric on $\M$ descends to $\Mhyp / \DO$ and is
isometric (up to a constant scalar factor) to the much-studied
Weil-Petersson metric on Teichmüller space.  The sequel
\cite{clarke:_compl_of_manif_of_rieman_metric} to this paper contains
an application to the geometry of Teichmüller space with respect to a
class of metrics we define on it that generalize the Weil-Petersson
metric.

The first investigations of the geometry of the manifold of metrics,
which essentially avoided the infinite-dimensional issues that arise,
were undertaken by DeWitt as part of his Hamiltonian formulation of
general relativity \cite{dewitt67:_quant_theor_of_gravit}.  Soon
thereafter, Ebin \cite{ebin70:_manif_of_rieman_metric} used the $L^2$
metric to investigate the differential topology of $\M$, as well as
its quotient by the diffeomorphism group.

Later, the basic Riemannian geometry of the $L^2$ metric was
independently investigated by Freed and Groisser
\cite{freed89:_basic_geomet_of_manif_of} as well as Gil-Medrano and
Michor \cite{gil-medrano91:_rieman_manif_of_all_rieman_metric}.  (The
latter paper even considers such questions for open base manifolds
$M$.)  Many of the results of the two papers coincide (both compute
the geodesics and curvature of the $L^2$ metric, for instance), and
the crucial observation of both papers is that the computation of
certain essential geometric quantities is \emph{pointwise} in nature.
For example, a geodesic on $\M$ is a one-parameter family $g_t$ of
metrics on $M$, uniquely determined by an initial point $g_0$ and
initial tangent vector $g_0'$.  However, more is true: for each $x \in
M$, the value of $g_t(x)$ depends only on $g_0(x)$ and $g_0'(x)$, and
not on the values of $g_0$ or $g_0'$ at any other points of $M$.

As will be seen in the subsequent sections,
the main challenge in studying the metric geometry of the
$L^2$ metric is in moving from \emph{pointwise} questions to
\emph{local} or even \emph{global} questions.

The paper is organized as follows.

In Section \ref{sec:preliminaries}, we present the necessary facts and
notation needed for studying the manifold of Riemannian metrics.  In
Section \ref{sec:weak-riem-manif}, we review the definition of a
Riemannian metric on a Hilbert or Fréchet manifold, paying special
attention to the distinction between weak and strong metrics.  We also
prove some results on the exponential mapping and the induced distance
function of a weak Riemannian manifold for which we could not find a
rigorous treatment in the weak case.  In Section
\ref{sec:basic-metr-geom}, we prove the main theorem mentioned above.
The results of Section \ref{sec:weak-riem-manif} turn out to be
inadequate in this situation, but we give a direct proof for the $L^2$
metric.

\subsection*{Acknowledgments}

The results of this paper formed a portion of my Ph.D.~thesis
(\cite{clarked):_compl_of_manif_of_rieman}, where the reader may find
the facts here presented in significantly greater detail), written at
the Max Planck Institute for Mathematics in the Sciences in Leipzig
and submitted to the University of Leipzig.  I would like to thank my
advisor Jürgen Jost for his years of patient assistance.  I am also
grateful to Guy Buss, Christoph Sachse, and Nadine Große for many
fruitful discussions and their careful proofreading.

\section{The Manifold of Riemannian Metrics}\label{sec:preliminaries}

The basic facts about the manifold of Riemannian metrics given in this
section can be found in \cite{freed89:_basic_geomet_of_manif_of},
\cite{gil-medrano91:_rieman_manif_of_all_rieman_metric}, and \cite[\S
2.5]{clarked):_compl_of_manif_of_rieman}

\subsection{The $L^2$ Metric}\label{sec:manif-riem-metr}

For the entirety of the paper, let $M$ denote a fixed closed,
orientable, $n$-dimensional $C^\infty$ manifold.

We denote by $S^2 T^*M$ the vector bundle of symmetric $(0,2)$ tensors
over $M$, and by $\s$ the Fréchet space of $C^\infty$ sections of $S^2
T^* M$.  The space $\M$ of Riemannian metrics on $M$ is an open subset
of $\s$, and hence it is trivially a Fréchet manifold, with tangent
space at each point canonically identified with $\s$.  (For a detailed
treatment of Fréchet manifolds, see, for example,
\cite{hamilton82:_inver_funct_theor_of_nash_and_moser}.  For a more
thorough treatment of the differential topology and geometry of $\M$,
see \cite{ebin70:_manif_of_rieman_metric}.)

$\M$ carries a natural Riemannian metric $(\cdot, \cdot)$, called the
\emph{$L^2$ metric}, induced by integration from the natural scalar product
on $S^2 T^* M$.  Given any $g \in \M$ and $h, k \in \s \cong T_g \M$,
we define
\begin{equation}\label{eq:1}
  (h, k)_g := \integral{M}{}{\tr_g(hk)}{d \mu_g}.
\end{equation}
Here, $\tr_g(hk)$ is given in local coordinates by $\tr(g^{-1} h
g^{-1} k) = g^{ij} h_{il} g^{lm} k_{jm}$, and $\mu_g$ denotes the
volume form induced by $g$.  We will denote the norm induced by
$(\cdot, \cdot)_g$ with $\normdot_g$.

Throughout the paper, we use the notation $d$ for the distance
function induced from $(\cdot, \cdot)$ by taking the infimum of the
lengths of paths between two given points.

The basic Riemannian geometry of $(\M, (\cdot, \cdot))$ is relatively
well understood.  For example, it is known that the sectional
curvature of $\M$ is nonpositive
(cf. \cite[Cor.~1.17]{freed89:_basic_geomet_of_manif_of}), and the
geodesics of $\M$ are known explicitly (cf. Section
\ref{sec:geodesics}).

We will consider related structures restricted to a point $x \in M$.
Let $\satx := S^2 T^*_x M$ denote the vector space of symmetric
$(0,2)$-tensors at $x$, and let $\Matx \subset \satx$ denote the open
subset of tensors inducing a positive definite scalar product on $T_x
M$.  Then $\Matx$ is an open submanifold of $\satx$, and its tangent
space at each point is canonically identified with $\satx$. For each
$g \in \Matx$, we define a scalar product $\langle \cdot , \cdot
\rangle_g$ on $T_g \Matx \cong \satx$ by setting, for all $h, k \in
\satx$,
\begin{equation*}
  \langle h, k \rangle_g := \tr_g(hk).
\end{equation*}
Then $\langle \cdot , \cdot \rangle$ defines a Riemannian metric on
the finite-dimensional manifold $\Matx$.

\subsection{A Product Manifold Structure for
  $\M$}\label{sec:manif-posit-funct}

$\M$ can be written globally as a product manifold, with the factors
given by the manifold of metrics inducing a given volume form and the
manifold of volume forms on $M$.  We sketch the details of this here.

The set of smooth volume forms on $M$, denoted by $\V$, is a Fréchet
manifold.  In fact, it is an open subset of $\Omega^n(M)$, the Fréchet
space of highest-order differential forms on $M$, and therefore its
tangent spaces are canonically identified with $\Omega^n(M)$.

Given any volume form $\mu \in \V$ and any $n$-form $\alpha \in
\Omega^n(M)$, there exists a unique $C^\infty$ function, denoted by
$(\alpha / \mu)$, such that
\begin{equation}\label{eq:118}
  \alpha = \left( \frac{\alpha}{\mu} \right) \mu.
\end{equation}
If $\alpha$ is also a smooth volume form, then $(\alpha / \mu)$ is
additionally a strictly positive function.

Now, for any fixed volume form $\mu \in \V$, let $\Mmu \subset \M$
denote the set of metrics inducing the volume $\mu$.  It is a smooth
submanifold of $\M$ \cite[\S 8]{ebin70:_manif_of_rieman_metric} with
tangent space
\begin{equation*}
  T_g \Mmu = \{ h \in \s \mid \tr_g h = 0 \}.
\end{equation*}
This follows from the fact that the differential of the map $g \mapsto
\mu_g$ at the point $g_0$ is $h \mapsto \frac{1}{2} \tr_{g_0}(h)
\mu_{g_0}$ \cite[Lemma~2.38]{clarked):_compl_of_manif_of_rieman}.

We now define a map
\begin{align*}
  i_\mu : \V \times \Mmu &\rightarrow \M \\
  (\nu, g) &\mapsto \left( \frac{\nu}{\mu} \right)^{2/n} g.
\end{align*}
It is not hard to see that $\mu_{i_\mu (\nu, g)} = \nu$, as well as
that $i_\mu$ is a diffeomorphism.

$\Mmu$ inherits a Riemannian metric as a submanifold of $(\M, (\cdot,
\cdot))$.  We can also pull back the $L^2$ metric on $\M$ to $\V$ via
$i_\mu$, namely by choosing any $g \in \Mmu$ and noting that $i_\mu|
\V \times \{g\}$ is an embedding of $\V$ into $\M$.  A relatively
straightforward computation  then shows that this
pull-back metric is given by
\begin{equation}\label{eq:2}
  (\!( \alpha, \beta )\!)_\nu = \frac{4}{n} \integral{M}{}{\biggl(
      \frac{\alpha}{\nu} \biggr) \left(
      \frac{\beta}{\nu} \right)}{d \nu}, \qquad \alpha, \beta \in
  \Omega^n(M) \cong T_\nu \V.
\end{equation}
Thus, this metric is independent of our choices of $g$ and $\mu$.  In
fact, it is just the constant factor $\frac{4}{n}$ times the most
obvious Riemannian metric on $\V$.

\subsection{Geodesics}\label{sec:geodesics}

As noted above, the geodesic equation of $\M$ can be solved explicitly
(see \cite[Thm.~2.3]{freed89:_basic_geomet_of_manif_of},
\cite[Thm.~3.2]{gil-medrano91:_rieman_manif_of_all_rieman_metric}).
We will not need it in full generality here, but we will need the
geodesics associated to the product manifold structure of $\M$.  They
are given in the following two propositions.

\begin{prop}[{\cite[Prop.~2.1]{freed89:_basic_geomet_of_manif_of}}]\label{prop:5}
  The geodesic in $\V$ starting at
  $\nu_0$ with initial tangent $\alpha$ is given by
  \begin{equation*}
    \nu_t = \left( 1 + \frac{t}{2} \biggl( \frac{\alpha}{\nu_0}
      \biggr) \right)^2 \nu_0.
  \end{equation*}
  As a result, for every $\nu_0 \in \V$, the exponential mapping
  $\exp_{\nu_0}$ is a diffeomorphism from an open set $U \subset T_{\nu_0}
  \V$ onto $\V$.
\end{prop}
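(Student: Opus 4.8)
The plan is to exploit the pointwise nature of the $L^2$ metric emphasized in the introduction, reducing both claims to an explicit one-dimensional computation. Writing any volume form as $\nu = f\mu$ for a fixed reference $\mu \in \V$ and a positive function $f$, and a tangent vector as $\alpha = a\mu$ with $a$ smooth, formula \eqref{eq:2} becomes $(\!( \alpha, \beta )\!)_\nu = \frac{4}{n}\int_M \frac{ab}{f}\,d\mu$, which decouples completely over the points of $M$. Accordingly, the energy of a path $\nu_t = f_t\mu$ is $\frac{4}{n}\int_0^1\!\int_M \frac{\dot f^2}{f}\,d\mu\,dt$, and by Fubini this equals the integral over $M$ of the pointwise energies $\frac{4}{n}\int_0^1 \frac{\dot f^2}{f}\,dt$. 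Since a geodesic of a weak metric is most cleanly characterized as a critical point of this energy functional, a geodesic may be found by solving the one-dimensional problem independently at each $x \in M$ and then assembling the solutions. I expect the only genuine subtlety to be this assembly: confirming that the pointwise solutions fit together into a smooth curve in the Fr\'echet manifold $\V$ and constitute a critical point of the global energy, since the usual ODE existence theory is unavailable for a weak metric. The explicit formula is what sidesteps this.

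For the formula itself, I would observe that each fiber of positive volume elements carries the one-dimensional metric $\frac{4}{n}\,f^{-1}(df)^2$, whose geodesic equation is $\ddot f - \frac{1}{2f}\dot f^2 = 0$. This is elementary after the substitution $w = \sqrt{f}$, which flattens the metric to a constant multiple of $(dw)^2$; geodesics are then linear in $w$, giving $f_t = f_0\bigl(1 + \frac{t}{2}(a/f_0)\bigr)^2$ after imposing $f(0) = f_0$ and $\dot f(0) = a$. Alternatively one simply verifies this expression solves the equation by substitution. Since $(a/f_0) = (\alpha/\nu_0)$ pointwise, multiplying through by $\mu$ recovers $\nu_t = \bigl(1 + \frac{t}{2}(\alpha/\nu_0)\bigr)^2 \nu_0$, and as the right-hand side is polynomial in $t$ with smooth coefficients it is manifestly a smooth curve in $\V$ as long as the scalar factor stays positive.

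It remains to treat the exponential mapping. Setting $t = 1$ gives $\exp_{\nu_0}(\alpha) = \bigl(1 + \frac{1}{2}(\alpha/\nu_0)\bigr)^2 \nu_0$, and for the geodesic to remain in $\V$ up to time $1$ the factor $1 + \frac{1}{2}(\alpha/\nu_0)$ must stay strictly positive, so I would take
\[
  U = \Bigl\{ \alpha \in T_{\nu_0}\V \;\Big|\; 1 + \tfrac{1}{2}(\alpha/\nu_0) > 0 \text{ everywhere on } M \Bigr\}.
\]
Since $M$ is compact and $\alpha \mapsto \min_M\bigl(1 + \frac{1}{2}(\alpha/\nu_0)\bigr)$ is continuous, $U$ is open. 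The decisive step is to exhibit an explicit inverse: given any $\nu \in \V$, the function $\nu/\nu_0$ is smooth and strictly positive, so $\alpha := 2\bigl((\nu/\nu_0)^{1/2} - 1\bigr)\nu_0$ is a well-defined element of $T_{\nu_0}\V$ with $1 + \frac{1}{2}(\alpha/\nu_0) = (\nu/\nu_0)^{1/2} > 0$, whence $\alpha \in U$; a direct substitution shows $\exp_{\nu_0}(\alpha) = \nu$, and conversely that this map inverts $\exp_{\nu_0}$ on $U$. Both $\exp_{\nu_0}$ and this inverse are smooth, since the square and the positive square root of a smooth positive function are smooth, so $\exp_{\nu_0} : U \to \V$ is a diffeomorphism onto all of $\V$. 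The global surjectivity, which is stronger than what a weak metric guarantees in general, is precisely what the explicit inverse delivers.
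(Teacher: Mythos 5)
The paper does not prove this proposition---it is quoted directly from Freed--Groisser---so there is no internal proof to compare against; your argument is correct and is essentially the standard one from that reference: the metric \eqref{eq:2} decouples over the points of $M$, the substitution $w=\sqrt{f}$ flattens the fiberwise metric so geodesics are linear in $w$, and the explicit formula furnishes a smooth global inverse $\nu \mapsto 2\bigl((\nu/\nu_0)^{1/2}-1\bigr)\nu_0$ of $\exp_{\nu_0}$ on the open set $U$. The one step you gesture at rather than carry out---that the pointwise solutions assemble into a genuine critical point of the global energy, and that any critical point must satisfy the pointwise equation (whence uniqueness)---follows by localizing the first variation, which is just the integral over $M$ of the pointwise first variations.
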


\begin{prop}[{\cite[Thm.~8.9]{ebin70:_manif_of_rieman_metric} and
    \cite[Prop.~2.2]{freed89:_basic_geomet_of_manif_of}}]\label{prop:1}
  The submanifold $\M_\mu$ is a globally symmetric space, and the
  geodesic starting at $g_0$ with initial tangent $g'_0 = h$ is given
  by
  \begin{equation*}
    g_t = g_0 \exp (t H),
  \end{equation*}
  where $H := g_0^{-1} h$.

  In particular, $\M_\mu$ is geodesically complete, and $\exp_g$ is a
  diffeomorphism from $T_g \M_\mu$ to $\M_\mu$ for any $g \in \M_\mu$.
\end{prop}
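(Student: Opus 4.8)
The plan is to reduce the entire statement to a pointwise question on the finite-dimensional manifold $\Matx$, exploiting the fact that on $\Mmu$ the volume form is held fixed. For a path $g_t$ in $\Mmu$ the volume element $d\mu_{g_t} = d\mu$ is independent of $t$, so by Fubini the energy functional splits as an integral over $M$ of pointwise energies,
\begin{equation*}
  E = \frac{1}{2} \integral{M}{}{\biggl( \integral{0}{1}{\langle g_t', g_t' \rangle_{g_t}}{dt} \biggr)}{d\mu}.
\end{equation*}
Because the endpoint conditions and the constraint $\tr_g h = 0$ are both imposed pointwise, the minimization decouples over $x \in M$: a curve $g_t$ is a geodesic of $\Mmu$ if and only if, for each $x$, the curve $t \mapsto g_t(x)$ is a geodesic in the submanifold of $\Matx$ consisting of tensors with a fixed determinant, the locus cut out pointwise by the volume constraint. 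This is exactly the ultralocal phenomenon noted in the introduction, and it is what makes $\Mmu$, where the volume is constant, tractable.

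Pointwise, after a choice of basis of $T_x M$, this fixed-determinant submanifold of $\Matx$ with the metric $\langle h, k \rangle_g = \tr_g(hk)$ is isometric to the globally symmetric space $SL(n, \mathbb{R}) / SO(n)$ of positive-definite symmetric matrices of determinant one. The geodesic symmetry at $g_0$ is $s_{g_0}(g) = g_0 g^{-1} g_0$: it fixes $g_0$, preserves the determinant, is a pointwise (hence, after integration, an $L^2$-) isometry, and satisfies $d s_{g_0}|_{g_0} = -\mathrm{id}$, all of which one checks by differentiating and by the cyclicity of the trace. This exhibits $\Mmu$ as a globally symmetric space. The geodesics through $g_0$ are then the transvection orbits $t \mapsto g_0 \exp(tH)$, where $H := g_0^{-1} h$; here $\tr H = \tr_{g_0} h = 0$ forces $\det \exp(tH) = e^{t \tr H} = 1$, so the curve stays in the fixed-determinant locus, and one checks directly that $g_0 \exp(tH)$ is symmetric and positive-definite for all $t$. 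The formula can alternatively be confirmed by substitution into the pointwise geodesic equation.

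Since $\exp(tH)$ is defined for all $t$, the pointwise geodesics exist for all time, so $\Mmu$ is geodesically complete. As $SL(n, \mathbb{R}) / SO(n)$ is simply connected of nonpositive curvature, the Cartan--Hadamard theorem gives that the pointwise exponential map is a diffeomorphism of the tangent space onto the whole space; its inverse is the matrix logarithm $g \mapsto g_0 \log(g_0^{-1} g)$, which is well defined because $g_0^{-1} g$ is $g_0$-self-adjoint with positive eigenvalues, and which lands in the trace-free tensors since $\tr_{g_0}\bigl(g_0 \log(g_0^{-1} g)\bigr) = \log \det(g_0^{-1} g) = 0$ when $g$ and $g_0$ induce the same volume. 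It then remains to assemble these pointwise diffeomorphisms into a diffeomorphism of Fr\'echet manifolds: I would define $\exp_{g_0}(h) = g_0 \exp(g_0^{-1} h)$ and its candidate inverse sectionwise, and argue that both are smooth maps of Fr\'echet manifolds because they arise from composing the entire matrix functions $\exp$ and $\log$ with smooth sections. The main obstacle is precisely this globalization --- verifying smoothness and bijectivity onto all of $\Mmu$ in the Fr\'echet category --- since the finite-dimensional symmetric-space facts are classical; the delicate point is that the sectionwise logarithm must be shown to depend smoothly enough on $g_0^{-1} g$ to define a smooth map on the Fr\'echet manifold.
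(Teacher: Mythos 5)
The paper does not actually prove this proposition---it is quoted with attribution to Ebin and to Freed--Groisser---so there is no internal proof to compare against. Your sketch is essentially the standard argument of those cited sources: the fixed volume form makes the energy functional and the constraint decouple pointwise, reducing everything to the geodesic geometry of the determinant-one locus in $\Matx$, i.e.\ of $SL(n,\R)/SO(n)$ with its invariant metric, where the symmetry $s_{g_0}(g)=g_0g^{-1}g_0$, the geodesics $g_0\exp(tH)$, and Cartan--Hadamard give everything claimed. The formulas and the trace/determinant bookkeeping in your sketch are all correct, and you correctly identify the one step that is not purely pointwise: passing from the fiberwise diffeomorphisms to a diffeomorphism of Fr\'echet manifolds. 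That step is standard (the fiberwise maps $\exp$ and $\log$ are real-analytic and vary smoothly with the base point, so the induced superposition operators on spaces of smooth sections are smooth), but as written it remains a flagged gap rather than a completed argument; this is also precisely the content one would be importing from Ebin's Theorem~8.9.
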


As we will see in Section \ref{sec:basic-metr-geom}, Propositions
\ref{prop:5} and \ref{prop:1} are sufficient to prove that $\V$ and
$\Mmu$, together with the Riemannian metrics induced from the $L^2$
metric on $\M$, are metric spaces.  However, we will also see that to
prove that the $L^2$ metric on $\M$ itself induces a metric space
structure, the geodesic equation is insufficient.

\section{Weak Riemannian manifolds}\label{sec:weak-riem-manif}

A Riemannian metric on a Hilbert or Fréchet manifold is a smooth
$(0,2)$-tensor that induces a bounded, positive-definite scalar
product on each tangent space.  There are two kinds of Riemannian
metrics on infinite-dimensional manifolds: if the tangent spaces are
complete with respect to the scalar product induced by the metric, it
is called \emph{strong}.  Otherwise, it is called \emph{weak}.  As we
will see, weak metrics are significantly more technically challenging
than strong metrics (which are treated in, e.g.,
\cite{klingenberg95:_rieman_geomet} and
\cite{lang95:_differ_and_rieman_manif}).  However, in the case of a
proper Fréchet space (where the topology does not come from any single
norm), only weak Riemannian metrics are possible.  This, among other
considerations like the pointwise nature of certain geometric
quantities mentioned in the introduction, cements their importance in
global analysis and the value of their study.

The subtle but important distinction between weak and strong
Riemannian metrics leads to a vast gulf in their properties.  For a
strong Riemannian metric, one can reproduce many of the important
results from finite-dimensional Riemannian geometry
\cite{klingenberg95:_rieman_geomet,lang95:_differ_and_rieman_manif}.
For example, the Levi-Civita connection, geodesics, and the
exponential mapping exist.  A strong Riemannian metric induces a
distance function that gives a metric space structure on the manifold.
In addition, the metric topology agrees with the manifold topology.

None of the above-mentioned results hold in general for weak
Riemannian manifolds, though some can be directly shown for many
important examples.

In this section, we will give some basic results on the distance
function of a weak Riemannian manifold.  We have not found these
results formally recorded anywhere, though they may be known to
experts in the field.  Our approach essentially follows that of
\cite[\S 1.8]{klingenberg95:_rieman_geomet}, which treats the case of
strong Riemannian manifolds.  We have made the necessary adjustments
to the results and proofs so that they hold in the weak case.

For the remainder of this section, let $(N, \gamma)$ be a Riemannian
Fréchet manifold.  Just as in the case of finite-dimensional
Riemannian manifolds, we can use $\gamma$ to define a distance between
points of $N$ by taking the infima of lengths of paths.  It is then
clear that this distance function is a \emph{pseudometric}, but as in
\cite{michor05:_vanis_geodes_distan_spaces_of,
  michor06:_rieman_geomet_spaces_of_plane_curves}, it may fail to be
positive definite.  The problem in showing positive-definiteness on a
weak Riemannian manifold is that the exponential mapping and its
inverse need not be defined on an open $\gamma$- or $d$-ball,
respectively.  On the other hand, this is a vital ingredient in the
proof for strong Riemannian metrics.

\subsection{The exponential mapping and distance function on a weak
  Riemannian manifold}\label{sec:expon-mapp-dist-1}

We cannot prove much that is useful about weak Riemannian manifolds
without first making a couple of assumptions on the exponential
mapping---this is to avoid the problems encountered in, e.g., \cite[\S
1]{constantin03:_geodes_flow_diffeom_group_of_circl} where the
exponential mapping exists but is not a local diffeomorphism.

\begin{dfn}\label{dfn:20}
  We call a weak Riemannian manifold $(N, \gamma)$ \emph{normalizable
    at $x$} if there are open neighborhoods $U_x \subseteq T_x N$ and
  $V_x \subseteq N$ of $0$ and $x$, respectively, such that
  \begin{enumerate}
  \item $U_x$ is convex and
  \item the exponential mapping $\exp_x$ exists and is a
    $C^1$-diffeomorphism between $U_x$ and $V_x$.
  \end{enumerate}
  Note that the neighborhoods $U_x$ and $V_x$ are required to be open
  in the manifold topology of $N$.  We do not require that $U_x$ be
  open in the topology induced by $\gamma$.

  We call $(N, \gamma)$ \emph{normalizable} if it is normalizable at
  each $x \in N$.
\end{dfn}

As in finite-dimensional Riemannian geometry, for $x \in N$ the length
of a radial geodesic $\alpha(t) := \exp_x (t v)$, with $v \in T_x N$,
is $\| v \|_\gamma$.  The finite-dimensional proof can be carried over
word-for-word.

\begin{dfn}
  Let $x \in N$.  We denote by $S_x N \subset T_x N$ the unit sphere,
  i.e.,
  \begin{equation*}
    S_x N = \{ v \in T_x N \mid \| v \|_\gamma = 1 \}.
  \end{equation*}
\end{dfn}

For the rest of this section, let $(N, \gamma)$ be a weak Riemannian
manifold that is normalizable at a point $x \in N$, and retain the
notation of Definition \ref{dfn:20}.

\begin{lem}\label{lem:1}
  The function
  \begin{equation*}
    \begin{aligned}
      R : T_x N \setminus \{0\} &\rightarrow \R_{\geq 0} \\
      v &\mapsto \sup \{ r \in \R_{\geq 0} \mid r \cdot v \in U_x \},
    \end{aligned}
  \end{equation*}
  is continuous.

  Additionally, for each $v \in T_x N \setminus \{0\}$, $R(v) > 0$.
\end{lem}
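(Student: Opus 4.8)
The function $R$ is precisely the reciprocal of the Minkowski gauge of the open convex set $U_x$, so the plan is to exploit convexity exactly as one does in proving continuity of a Minkowski functional. First I would dispose of positivity, and then establish continuity by proving lower and upper semicontinuity separately at an arbitrary $v_0 \in T_x N \setminus \{0\}$. Convexity enters only in the upper estimate, which is the heart of the matter.

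For positivity, recall that $U_x$ is an open neighborhood of $0$ and that scalar multiplication is continuous on the Fr�chet space $T_x N$. Hence, for fixed $v \neq 0$, the curve $r \mapsto rv$ sends $0$ into the open set $U_x$, so $rv \in U_x$ for all sufficiently small $r \geq 0$; this gives $R(v) > 0$. The same two facts yield lower semicontinuity: if $c < R(v_0)$, pick $r > c$ with $rv_0 \in U_x$, choose a neighborhood of $rv_0$ inside $U_x$, and pull it back through the continuous map $v \mapsto rv$ to obtain a neighborhood of $v_0$ on which $R \geq r > c$. Letting $c \uparrow R(v_0)$ gives $\liminf_{v \to v_0} R(v) \geq R(v_0)$.

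The main obstacle is upper semicontinuity, and this is where convexity is indispensable. Assuming $R(v_0) < \infty$, the point $p_0 := R(v_0)\, v_0$ lies on $\partial U_x$: it is not in the open set $U_x$ by definition of the supremum, while every $rv_0$ with $r < R(v_0)$ lies in $U_x$ by convexity together with $0 \in U_x$. Since $T_x N$ is a Fr�chet space and hence locally convex, the geometric Hahn--Banach theorem separates the open convex set $U_x$ from the point $p_0$: there is a continuous linear functional $\ell$ and a constant $\alpha$ with $\ell(w) < \alpha \leq \ell(p_0)$ for all $w \in U_x$. Because $0 \in U_x$ we get $\alpha > 0$, and $\ell(p_0) = R(v_0)\,\ell(v_0)$ then forces $\ell(v_0) > 0$.

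With $\ell$ in hand the bound is immediate: if $rv \in U_x$ then $r\,\ell(v) < \alpha \leq R(v_0)\,\ell(v_0)$, so for any $v$ with $\ell(v) > 0$ we have
\[
  R(v) \;\leq\; \frac{R(v_0)\,\ell(v_0)}{\ell(v)}.
\]
By continuity of $\ell$, the right-hand side tends to $R(v_0)$ as $v \to v_0$ (and $\ell(v) > 0$ near $v_0$ since $\ell(v_0) > 0$), whence $\limsup_{v \to v_0} R(v) \leq R(v_0)$. Combining the two semicontinuity estimates proves continuity at $v_0$. The finiteness of $R(v_0)$ used above holds whenever $U_x$ contains no ray through the origin, as the stated codomain $\R_{\geq 0}$ presupposes; the same argument otherwise shows $R$ is continuous as a map into $[0, \infty]$.
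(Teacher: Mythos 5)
Your proof is correct and follows essentially the same route as the paper, which likewise observes that $R$ is the reciprocal of the Minkowski gauge of the open convex set $U_x$ and handles positivity exactly as you do via continuity of scalar multiplication. The only difference is that the paper simply cites the standard continuity theorem for Minkowski functionals (Schaefer, II.1.6), whereas you prove it directly by splitting into lower semicontinuity (from openness) and upper semicontinuity (via Hahn--Banach separation), and you also correctly flag the $R(v_0)=\infty$ caveat that the paper leaves implicit.
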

\begin{proof}
  $R$ is the multiplicative inverse of the Minkowski functional of
  $U_x$, so continuity of $R$ follows from
  \cite[II.1.6]{schaefer99:_topol_vector_spaces} by the convexity and
  openness of $U_x$.

  Now, let $v \in T_x N$ be given.  Since $T_x N$ with its manifold
  topology is a topological vector space and $U_x$ is a neighborhood
  of the origin, there is some $\epsilon > 0$ such that $\epsilon
  \cdot v \in U_x$.  Thus $R(v) > 0$.
\end{proof}

\begin{rmk}
  Lemma \ref{lem:1} does not imply that $R(v)$ is uniformly bounded
  away from zero, even if we restrict the domain of $R$ to $S_x N$ at
  each $x \in N$.
\end{rmk}

The next two propositions allow us to control the lengths of paths
based at $x$, provided they lie within the neighborhood $U_x$.  We
will remark below on how these statements have been weakened from the
strong case.

\begin{prop}\label{prop:4}
  Let $r(s) \cdot v(s) \in U_x$, $s \in [0,1]$, be a path in $U_x$
  such that $v(s) \in S_x N $, $r(s) \in \R_{\geq 0}$.  (That is, we
  express the path in polar coordinates.)  We define a path $\alpha$
  by $\alpha(s) := \exp_x (r(s) v(s))$, $s \in [0,1]$.  Then
  \begin{equation*}
    L(\alpha) \geq | r(1) - r(0) |,
  \end{equation*}
  with equality if and only if $v(s)$ is constant and $r'(s) \geq 0$.
\end{prop}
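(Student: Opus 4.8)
The plan is to reduce the statement to a pointwise lower bound on the speed $\|\alpha'(s)\|_{\alpha(s)}$ and then integrate. Write $c(s) := r(s)\,v(s)$, so that $\alpha(s) = \exp_x(c(s))$. The chain rule gives $\alpha'(s) = d(\exp_x)_{c(s)}\bigl(c'(s)\bigr)$, with $c'(s) = r'(s)\,v(s) + r(s)\,v'(s)$. Since $v(s) \in S_x N$, differentiating $\gamma_x(v(s), v(s)) = 1$ shows that $v'(s)$ is $\gamma_x$-orthogonal to $v(s)$. Setting $T(s) := d(\exp_x)_{c(s)}(v(s))$, linearity of the differential yields $\alpha'(s) = r'(s)\,T(s) + r(s)\,d(\exp_x)_{c(s)}(v'(s))$. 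Here $\|T(s)\|_{\alpha(s)} = 1$ is precisely the unit-speed property of radial geodesics recorded just before the statement.

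The heart of the argument is a Gauss-lemma-type identity: for every $\xi \in U_x$ and $w \in T_x N$ (identifying $T_\xi(T_x N) \cong T_x N$),
\[
  \gamma_{\exp_x(\xi)}\bigl(d(\exp_x)_\xi(\xi),\, d(\exp_x)_\xi(w)\bigr) = \gamma_x(\xi, w).
\]
Applying this with $\xi = c(s) = r(s)v(s)$ and $w = v'(s)$ gives $\gamma_{\alpha(s)}\bigl(T(s),\, d(\exp_x)_{c(s)}(v'(s))\bigr) = \gamma_x(v(s), v'(s)) = 0$ whenever $r(s) > 0$. Thus $T(s)$ and $d(\exp_x)_{c(s)}(v'(s))$ are $\gamma_{\alpha(s)}$-orthogonal, and the Pythagorean theorem yields
\[
  \|\alpha'(s)\|_{\alpha(s)}^2 = r'(s)^2 + r(s)^2\,\bigl\|d(\exp_x)_{c(s)}(v'(s))\bigr\|_{\alpha(s)}^2 \geq r'(s)^2,
\]
the case $r(s) = 0$ being harmless since the second summand vanishes there. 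Integrating the bound $\|\alpha'(s)\|_{\alpha(s)} \geq |r'(s)|$ gives
\[
  L(\alpha) = \int_0^1 \|\alpha'(s)\|_{\alpha(s)}\,ds \geq \int_0^1 |r'(s)|\,ds \geq \Bigl| \int_0^1 r'(s)\,ds \Bigr| = |r(1) - r(0)|.
\]

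For the equality case I would track when each inequality is sharp. Equality in the first forces $\|\alpha'(s)\| = |r'(s)|$, hence $r(s)\,d(\exp_x)_{c(s)}(v'(s)) = 0$; since $\exp_x$ is a $C^1$-diffeomorphism its differential is injective, so $v'(s) = 0$ wherever $r(s) > 0$, giving that $v$ is constant. Equality in the second forces $r'$ to have constant sign, i.e. $r$ to be monotone, and comparing $r(0)$ with $r(1)$ pins the sign to $r'(s) \geq 0$. Conversely, if $v \equiv v_0$ and $r' \geq 0$, then $\alpha$ is a monotone reparametrization of the radial geodesic $t \mapsto \exp_x(t v_0)$ and $L(\alpha) = \int_0^1 r'(s)\,ds = r(1) - r(0) = |r(1) - r(0)|$.

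The main obstacle is establishing the Gauss-lemma identity in the present weak Fr\'echet setting. In finite dimensions its proof rests on the Levi-Civita connection, the symmetry lemma for two-parameter variations, and metric compatibility applied to a variation through radial geodesics---tools that are not automatically available for a weak Riemannian metric and that, a priori, demand more than the $C^1$-regularity of $\exp_x$ guaranteed by normalizability. The key will be to carry out the variational computation using only the structure granted by Definition \ref{dfn:20}, following the adjustments to the strong-case argument of \cite[\S 1.8]{klingenberg95:_rieman_geomet} indicated above; once the orthogonality is in hand, the remaining steps are the elementary integral estimates already carried out.
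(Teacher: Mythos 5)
Your strategy coincides with the paper's: show that the radial and angular components of $\alpha'(s)$ are $\gamma$-orthogonal, apply the Pythagorean identity to get $\|\alpha'(s)\|_\gamma \geq |r'(s)|$ pointwise, and integrate. The integral estimates and the treatment of the equality case are essentially those of the paper. The difficulty is that the one nontrivial ingredient---the Gauss-lemma orthogonality---is precisely the step you assert but do not prove, and you flag it yourself as ``the main obstacle.'' As written, the proposal is therefore an outline with its central step deferred: everything downstream of the orthogonality is elementary, and everything upstream of it is the actual content of the proposition.

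For comparison, the paper closes that gap in a few lines. It introduces the two-parameter family $c_s(t) := \exp_x(t\, v(s))$ of unit-speed radial geodesics and computes
\begin{equation*}
  \partial_t\, \gamma\bigl(\partial_s c_s(t), \partial_t c_s(t)\bigr)
  = \gamma\Bigl(\tfrac{\nabla}{\partial t}\partial_s c_s(t), \partial_t c_s(t)\Bigr)
  = \gamma\Bigl(\tfrac{\nabla}{\partial s}\partial_t c_s(t), \partial_t c_s(t)\Bigr)
  = \tfrac{1}{2}\,\partial_s \|\partial_t c_s(t)\|_\gamma^2 = 0,
\end{equation*}
using metric compatibility, the geodesic equation $\tfrac{\nabla}{\partial t}\partial_t c_s = 0$, symmetry of the connection, and $\|\partial_t c_s(t)\|_\gamma \equiv 1$; since $\partial_s c_s(0) = 0$, the inner product vanishes for all $t$. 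This is equivalent to your formulation, because $\partial_s c_s(r(s)) = r(s)\, d(\exp_x)_{c(s)}(v'(s))$ and $\partial_t c_s(r(s)) = d(\exp_x)_{c(s)}(v(s))$. Two points would still need to be supplied to make this rigorous in your framework. First, the variational computation requires $c_s(t)$ to be defined on an open neighborhood of the relevant parameter set; the paper obtains this from Lemma \ref{lem:1} and the compactness of $[0,1]$, producing the enlarged domain $U_{\epsilon,\delta}$, whereas convexity of $U_x$ alone only gives the closed radial segments. Second, the computation presupposes a torsion-free, metric-compatible covariant derivative along the variation for which the curves $t \mapsto \exp_x(tv)$ are geodesics---this is the hypothesis doing the real work behind ``the exponential mapping exists'' in Definition \ref{dfn:20}, and it should be invoked explicitly rather than left implicit. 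Once the orthogonality identity is in hand, your remaining steps (including the equality analysis, modulo the harmless ambiguity of $v(s)$ on the set where $r(s)=0$, which the paper's version shares) go through as you describe.
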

\begin{proof}
  By Lemma \ref{lem:1}, as well as the compactness of $[0,1]$, there
  exist $\epsilon, \delta > 0$ such that if
  \begin{equation*}
    (s,t) \in U_{\epsilon,\delta} := 
    \left\{
      (s,t) \in \R^2 \mid s \in [0,1],\ t \in [-\epsilon, r(s) + \delta]
    \right\},
  \end{equation*}
  then $t \cdot v(s) \in U_x$.

  We define a one-parameter family of paths in $N$ by
  \begin{equation*}
    c_s (t) := \exp(t \cdot v(s)), \quad (s,t) \in U_{\epsilon,\delta}
  \end{equation*}
  Note that for each fixed $s$, the path $t \mapsto c_s(t)$ is a
  geodesic with
  \begin{equation}\label{eq:23}
    \| \partial_t c_s(t) \|_\gamma \equiv \| \partial_t c_s(0)
    \|_\gamma = \| v(s) \|_\gamma = 1.
  \end{equation}
  Note also that the image of the family of paths $c_\cdot (\cdot)$ is
  a singular surface in $N$ parametrized by the coordinates $(s,t)$.

  Keeping this in mind, we compute
  \begin{equation} \label{eq:24}
    \begin{aligned}
      \partial_t \gamma \left(
        \partial_s c_s(t), \partial_t c_s(t) \right) &= \gamma \left(
        \frac{\nabla}{\partial t} \partial_s c_s(t), \partial_t c_s(t)
      \right) + \gamma \left(
        \partial_s c_s(t), \frac{\nabla}{\partial t} \partial_t c_s(t)
      \right) \\
      &= \gamma \left( \frac{\nabla}{\partial s} \partial_t
        c_s(t), \partial_t c_s(t)
      \right) \\
      &= \frac{1}{2} \partial_s \gamma \left(
        \partial_t c_s(t), \partial_t c_s(t)
      \right) \\
      &= 0.
    \end{aligned}
  \end{equation}
  
  From (\ref{eq:24}), we immediately see that $\gamma \left(
    \partial_s c_s(t), \partial_t c_s(t) \right)$ is independent of
  $t$.  However, we also have that $c_s(0) = x$ for all $s$, implying
  that $\partial_s c_s(0) = 0$, thus
  \begin{equation*}
    0 = \gamma(\partial_s c_s(0), \partial_t c_s(0)) = \gamma(\partial_s
    c_s(t), \partial_t c_s(t))
  \end{equation*}
  for all $t$.  That is, $\partial_s c_s(t)$ and $\partial_t c_s(t)$
  are orthogonal for all $s$ and $t$.

  We now estimate:
  \begin{align*}
    \| \alpha'(s) \|_\gamma^2 &= \left\| \frac{d}{ds} c_s(r(s)) \right\|_\gamma^2 \\
    &= \left\| \partial_s c_s(r(s)) + r'(s) \partial_r c_s(r(s))
    \right\|_\gamma^2 \\
    &= \left\|
      \partial_s c_s(r(s)) \right\|_\gamma^2 + |r'(s)|^2 \left\|
      \partial_r c_s(r(s))
    \right\|_\gamma^2 \\
    &\geq |r'(s)|^2.
  \end{align*}
  Here, in the third line, we have used orthogonality of $\partial_s
  c_s(t)$ and $\partial_t c_s(t)$.  In the last line, we have used
  (\ref{eq:23}).  Note that equality holds if and only if
  $\left\| \partial_s c_s(r(s)) \right\|_\gamma \equiv 0$.

  Finally, we see that
  \begin{equation*}
    L(\alpha) = \int_0^1 \| \alpha'(s) \|_\gamma \, ds \geq \int_0^1
    |r'(s)| \, ds
    \geq
    \left|
      \int_0^1 r'(s) \, ds
    \right| = |r(1) - r(0)|,
  \end{equation*}
  which proves the desired inequality.  We note that the first
  inequality is an equality if and only if $\left\| \partial_s
    c_s(r(s)) \right\|_\gamma \equiv 0$ (see the previous paragraph) and the
  second inequality is an equality if and only if $r'(s) \geq 0$ for
  all $s$.
\end{proof}

\begin{prop}\label{prop:2}
  Suppose $y \in V_x$ with $\exp_x^{-1}(y) = v$.  Then the path
  \begin{equation*}
    \alpha : [0,1] \rightarrow V_x, \quad \alpha(t) = \exp_x (t \cdot v)
  \end{equation*}
  is of minimal length among all paths in $V_x$ from $x$ to $y$.
  Furthermore, $\alpha$ is the unique minimal path (up to
  reparametrization) in $V_x$ from $x$ to $y$.
\end{prop}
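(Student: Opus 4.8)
The plan is to reduce the minimality statement to Proposition \ref{prop:4} by expressing an arbitrary competing path in the ``polar coordinates'' used there. Let $\beta : [0,1] \to V_x$ be any path from $x$ to $y$. Since $\exp_x$ is a $C^1$-diffeomorphism from $U_x$ onto $V_x$, I can pull $\beta$ back to a path $\tilde\beta(s) := \exp_x^{-1}(\beta(s))$ in $U_x \subseteq T_x N$, running from $0$ to $v$. Because $\exp_x$ is an isometry along radial directions and, more importantly, because lengths are preserved under the diffeomorphism in the sense that $L(\beta) = L(\exp_x \circ \tilde\beta)$, it suffices to control the length of the image under $\exp_x$ of an arbitrary path in $U_x$ from $0$ to $v$.

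The key step is to write $\tilde\beta(s)$ away from the origin in polar form as $\tilde\beta(s) = r(s) \cdot v(s)$ with $v(s) \in S_x N$ and $r(s) = \|\tilde\beta(s)\|_\gamma \geq 0$. With $r(0) = 0$ and $r(1) = \|v\|_\gamma$ (the latter because $\exp_x^{-1}(y) = v$ and the radial geodesic to $y$ has length $\|v\|_\gamma$), Proposition \ref{prop:4} applied to $\beta = \exp_x \circ \tilde\beta$ immediately yields
\begin{equation*}
  L(\beta) \geq |r(1) - r(0)| = \|v\|_\gamma = L(\alpha),
\end{equation*}
since $\alpha(t) = \exp_x(tv)$ is a radial geodesic of length $\|v\|_\gamma$. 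This establishes minimality. For the uniqueness clause, I would invoke the equality condition in Proposition \ref{prop:4}: equality $L(\beta) = \|v\|_\gamma$ forces $v(s)$ to be constant and $r'(s) \geq 0$, so $\tilde\beta$ traces the ray from $0$ to $v$ monotonically, i.e., $\beta$ is a monotone reparametrization of $\alpha$.

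The main obstacle is handling the behavior of the path at the origin, where the polar decomposition $r(s) v(s)$ is singular and $v(s)$ is a priori undefined. The path $\tilde\beta$ may pass through $0$ (i.e., return to $x$) at interior times, and near such times $v(s)$ need not extend continuously, so Proposition \ref{prop:4} cannot be applied directly to all of $[0,1]$. The cleanest remedy is to restrict attention to a final subinterval: let $s_0 := \sup\{ s \in [0,1] \mid \tilde\beta(s) = 0 \}$ be the last time $\beta$ passes through $x$. On $(s_0, 1]$ the path stays away from the origin, the polar coordinates are well-defined and continuous, and $r(s) \to 0$ as $s \to s_0^+$. Applying Proposition \ref{prop:4} on the closed subinterval $[s_0 + \varepsilon, 1]$ and letting $\varepsilon \to 0$ gives $L(\beta|_{[s_0,1]}) \geq \|v\|_\gamma$, while $L(\beta|_{[0,s_0]}) \geq 0$ only strengthens the bound. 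The same limiting argument, combined with the equality case, shows that equality forces $s_0 = 0$ and $\beta$ to be a monotone reparametrization of the radial geodesic, yielding uniqueness up to reparametrization.
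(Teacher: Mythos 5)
Your proof is correct and follows essentially the same route as the paper's: pull the competing path back through $\exp_x^{-1}$, express it in polar coordinates $r(s)\cdot v(s)$, apply Proposition \ref{prop:4} to get $L(\beta) \geq \|v\|_\gamma$, and use the equality case for uniqueness. The only difference is that you explicitly address the singularity of the polar decomposition at the origin via the cutoff time $s_0$ and a limiting argument---a technical point the paper's proof passes over silently---though note that equality does not force $s_0 = 0$ as you assert, but rather forces $\beta$ to be constant on $[0,s_0]$, which still yields a (non-strict) monotone reparametrization of $\alpha$.
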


\begin{rmk}
  Note that we will only show that $\alpha$ is minimal only among
  paths (or geodesics) in $V_x$, not all paths (or geodesics) in $N$.
  In particular, we cannot conclude from Proposition \ref{prop:2} that
  $d_\gamma (x,y) = L(\alpha)$, as is done in the case of a strong
  Riemannian manifold, where the neighborhood $V_x$ contains a
  $d_\gamma$-ball of positive radius.
\end{rmk}

\begin{proof}
  A path $\eta(s)$, $s \in [0,1]$, in $V_x$ from $x$ to $y$
  corresponds via $\exp_x^{-1}$ to a path $r(s) \cdot v(s)$ in $U_x$
  with $v(s) \in S_x N$, $r(0) = 0$ and $r(1) \cdot v(1) = v$,
  implying $|r(1)| = \| v \|_\gamma$. By Proposition \ref{prop:4}, we
  therefore have that
  \begin{equation}\label{eq:25}
    L(\eta) \geq \| v \|_\gamma = L(\alpha),
  \end{equation}
  immediately implying minimality of $\alpha$.

  Let equality hold in (\ref{eq:25}).  Again by Proposition
  \ref{prop:4}, this implies $v(s)$ is constant and $r'(s) \geq 0$ for
  all $s$.  However, this means that $\eta$ is just a
  reparametrization of $\alpha$, proving the second statement.
\end{proof}

As an obvious result of Proposition \ref{prop:2}, we get the following
criterion for a weak Riemannian manifold to be a metric space.  It
requires rather strong assumptions which could probably be weakened
significantly, but it will be sufficient for some purposes that we
have in mind.

\begin{thm}\label{thm:4}
  Let $(N,\gamma)$ be a weak Riemannian manifold.  Suppose that for
  each $x \in N$, the exponential mapping $\exp_x$ is a diffeomorphism
  between an open (in the manifold topology) neighborhood $U_x$ of
  $0 \in T_x N$ and $N$.
  
  Then $(N,d_\gamma)$, where $d_\gamma$ is the Riemannian distance function of
  $\gamma$, is a metric space.
\end{thm}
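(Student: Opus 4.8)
The plan is to show that $d_\gamma$ is a genuine metric by verifying positive-definiteness, since symmetry and the triangle inequality already hold for the pseudometric induced by any weak Riemannian metric. The key point is that the global hypothesis---that $\exp_x$ is a diffeomorphism from an open neighborhood $U_x$ onto all of $N$---lets us promote the \emph{local} minimality statement of Proposition \ref{prop:2} to a \emph{global} one, thereby circumventing the usual difficulty (emphasized in the remark following Proposition \ref{prop:2}) that $V_x$ need not contain a $d_\gamma$-ball of positive radius.

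First I would fix distinct points $x, y \in N$ and set $v := \exp_x^{-1}(y) \in U_x$, which is well-defined and nonzero precisely because $\exp_x$ is an injective diffeomorphism onto $N$ and $x \neq y$. Since $\exp_x$ carries $U_x$ onto all of $N$, we have $V_x = N$, so \emph{every} path in $N$ from $x$ to $y$ is a path in $V_x$. Proposition \ref{prop:2} then applies with no restriction: the radial geodesic $\alpha(t) = \exp_x(tv)$ is of minimal length among all paths in $N$ from $x$ to $y$, and its length is $\|v\|_\gamma$. Consequently $d_\gamma(x,y) = \inf_\eta L(\eta) = \|v\|_\gamma$, where the infimum is over all paths from $x$ to $y$. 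Because $\gamma$ is positive-definite on $T_x N$ and $v \neq 0$, we conclude $d_\gamma(x,y) = \|v\|_\gamma > 0$. This establishes positive-definiteness.

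It remains to assemble the standard metric axioms. Symmetry $d_\gamma(x,y) = d_\gamma(y,x)$ follows by reversing paths, and the triangle inequality follows from concatenation of paths; both are immediate from the definition of $d_\gamma$ as an infimum of lengths and do not require the hypotheses of the theorem. Nonnegativity is clear, and $d_\gamma(x,x) = 0$ is witnessed by the constant path. Together with the positive-definiteness just shown, these verify that $(N, d_\gamma)$ is a metric space.

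The main obstacle is conceptual rather than computational: it lies in recognizing that the global surjectivity hypothesis $V_x = N$ is exactly what upgrades Proposition \ref{prop:2} from a statement about competitors confined to $V_x$ to a statement about all competing paths, which is what the remark flags as the essential gap in the weak case. Once that identification is made, the heavy lifting has already been done in Propositions \ref{prop:4} and \ref{prop:2}, and the proof is short. One should take care to note that the hypothesis only assumes $\exp_x$ is a diffeomorphism onto $N$; the convexity of $U_x$ needed to invoke Proposition \ref{prop:2} (via Lemma \ref{lem:1} and Proposition \ref{prop:4}) comes from the fact that, $U_x$ being the diffeomorphic preimage of $N$ under a map defined by radial geodesics, we are in the setting of Definition \ref{dfn:20}, so the earlier results are available.
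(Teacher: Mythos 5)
Your proof is correct and follows essentially the same route as the paper, which presents the theorem as an ``obvious result of Proposition \ref{prop:2}'': since the hypothesis forces $V_x = N$, every competitor path from $x$ to $y \neq x$ lies in $V_x$, so the radial geodesic is globally minimal and $d_\gamma(x,y) = \| \exp_x^{-1}(y) \|_\gamma > 0$. Your closing claim that convexity of $U_x$ follows automatically from its being a diffeomorphic preimage is not actually justified (a diffeomorphic preimage need not be convex), but this hypothesis is equally implicit in the paper's own statement of the theorem, so it is not a defect of your argument relative to the paper's.
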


By Propositions \ref{prop:5} and \ref{prop:1}, we see that $\V$ and
$\Mmu$ satisfy the prerequisites of the theorem, so we get the
following corollary.

\begin{cor}
  The weak Riemannian metrics $(\cdot, \cdot)$ and $(\!(\cdot,
  \cdot)\!)$ (cf.~(\ref{eq:1}) and (\ref{eq:2}))
  induce metric space structures on $\Mmu$ and $\V$, respectively.
\end{cor}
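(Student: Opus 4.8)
The plan is to deduce the corollary by applying Theorem \ref{thm:4} separately to the two weak Riemannian manifolds $(\Mmu, (\cdot,\cdot))$ and $(\V, (\!(\cdot,\cdot)\!))$. Both are genuinely \emph{weak} Riemannian manifolds: $\Mmu$ is a submanifold of $\M$ modeled on the proper Fr\'echet space of trace-free symmetric $(0,2)$-tensors, and $\V$ is an open subset of the proper Fr\'echet space $\Omega^n(M)$, so in neither case can the induced metric be strong. Propositions \ref{prop:5} and \ref{prop:1} were arranged precisely to furnish the sole hypothesis of Theorem \ref{thm:4}, namely that at every basepoint the exponential map is a diffeomorphism from an open (manifold-topology) neighborhood of the origin onto the whole manifold. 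So the entire task is to match the conclusions of those two propositions to that hypothesis.

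First I would treat $\Mmu$. By Proposition \ref{prop:1}, for each $g \in \Mmu$ the exponential map $\exp_g$ is a diffeomorphism from $T_g \Mmu$ onto $\Mmu$ (this is the geodesic completeness of the symmetric space $\Mmu$). Here the neighborhood $U_g$ of Theorem \ref{thm:4} may be taken to be all of $T_g \Mmu$, which is trivially open and contains $0$. Thus the hypothesis of Theorem \ref{thm:4} holds verbatim, and the theorem yields that $(\Mmu, d)$, with $d$ the distance induced by the restriction of the $L^2$ metric $(\cdot,\cdot)$, is a metric space.

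Next I would treat $\V$. By Proposition \ref{prop:5}, for each $\nu_0 \in \V$ the exponential map $\exp_{\nu_0}$ is a diffeomorphism from an open set $U \subseteq T_{\nu_0}\V$ onto $\V$. The only point needing verification is that $U$ is a neighborhood of the origin, as demanded by Theorem \ref{thm:4}: since $\exp_{\nu_0}(0) = \nu_0 \in \V$ and $\exp_{\nu_0}$ carries $U$ onto $\V$, the origin must lie in $U$, and $U$ is open by the proposition. Hence Theorem \ref{thm:4} applies and gives that $(\V, d)$, with $d$ the distance induced by $(\!(\cdot,\cdot)\!)$, is a metric space.

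Because both assertions reduce to a direct appeal to Theorem \ref{thm:4}, the substantive content of the corollary has already been absorbed into that theorem together with the two explicit geodesic formulas of Propositions \ref{prop:5} and \ref{prop:1}; there is no remaining analytic obstacle, and in particular no separate positive-definiteness computation is required, since positive-definiteness of $d$ is exactly what Theorem \ref{thm:4} delivers once its hypothesis is confirmed. The only step calling for any care at all is the purely formal one of checking that the open sets produced by the propositions really are neighborhoods of $0$ in the manifold topology, which I have done above in each case.
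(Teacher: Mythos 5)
Your proposal is correct and follows essentially the same route as the paper, which likewise disposes of the corollary in one line by observing that Propositions \ref{prop:5} and \ref{prop:1} furnish exactly the hypothesis of Theorem \ref{thm:4} for $\V$ and $\Mmu$, respectively. One small caveat: your inference that $0 \in U$ for $\V$ (from surjectivity of $\exp_{\nu_0}|_U$ together with $\exp_{\nu_0}(0) = \nu_0$) is not valid as stated, since \emph{a priori} some other vector in $U$ could map to $\nu_0$; the fact itself is immediate from the explicit geodesic formula of Proposition \ref{prop:5}, which shows $U$ is the set of $\alpha$ with $1 + \tfrac{1}{2}\bigl(\alpha/\nu_0\bigr) > 0$ pointwise, a convex open neighborhood of the origin.
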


$(\M, (\cdot, \cdot))$ itself certainly does not satisfy the
prerequisites of Theorem \ref{thm:4}, as is explicitly shown in
\cite[\S 3]{gil-medrano91:_rieman_manif_of_all_rieman_metric}.
Therefore, we will have to use a different strategy in this case.

\section{$\M$ is a metric space}\label{sec:basic-metr-geom}

Proving that $d$ is a metric will be done by finding a manifestly
positive-definite metric (in the sense of metric spaces) on $\M$ that
in some way bounds the $d$-distance between two points from below,
implying that it is positive.  First, though, we will prove a
preliminary result that bounds from below the distance between metrics
with inducing differing volume forms.

\subsection{Lipschitz continuity of the square root of the volume}\label{sec:lipsch-cont-square}

Using the following lemma as a first step to proving that $d$ is a
metric takes its inspiration from \cite[\S
3.3]{michor05:_vanis_geodes_distan_spaces_of}.

\begin{lem}\label{lem:13}
  Let $g_0, g_1 \in \M$.  Then for any measurable subset $Y \subseteq
  M$,
  \begin{equation*}
    \left| \sqrt{\Vol(Y,g_1)} - \sqrt{\Vol(Y,g_0)} \right| \leq \frac{\sqrt{n}}{4} d(g_0,g_1).
  \end{equation*}
\end{lem}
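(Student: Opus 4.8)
The plan is to establish the estimate along an \emph{arbitrary} path from $g_0$ to $g_1$ and then pass to the infimum defining $d$. So I would first fix a smooth path $g_t$, $t \in [0,1]$, in $\M$ from $g_0$ to $g_1$ and set $V(t) := \Vol(Y, g_t) = \int_Y d\mu_{g_t}$. If $Y$ has measure zero then $\Vol(Y,g_0) = \Vol(Y,g_1) = 0$ and the inequality holds trivially (the forms $\mu_{g_0}$ and $\mu_{g_1}$ are mutually absolutely continuous, differing by a smooth positive factor, so the notion of a null set is unambiguous); I may thus assume $V(t) > 0$ for all $t$. Since $g_t$ is positive definite and depends smoothly on $t$, the function $V$ is then smooth and strictly positive, and $\sqrt{V}$ is differentiable. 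Using the formula $\tfrac{d}{dt}\mu_{g_t} = \tfrac12 \tr_{g_t}(g_t')\,\mu_{g_t}$ recalled in Section~\ref{sec:preliminaries} and differentiating under the integral sign, I would compute
\begin{equation*}
  \frac{d}{dt}\sqrt{V(t)} = \frac{1}{4\sqrt{V(t)}} \int_Y \tr_{g_t}(g_t')\, d\mu_{g_t}.
\end{equation*}

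The crux is to bound this derivative by $\tfrac{\sqrt{n}}{4}\,\| g_t' \|_{g_t}$, which requires passing from the \emph{linear} trace $\tr_{g_t}(g_t')$ governing the volume variation to the \emph{quadratic} integrand $\tr_{g_t}(g_t' g_t')$ defining the $L^2$ norm. This would happen in two steps. First, a pointwise Cauchy--Schwarz inequality applied to the eigenvalues of the $g_t$-self-adjoint endomorphism $g_t^{-1} g_t'$ gives $\bigl(\tr_{g_t}(g_t')\bigr)^2 \le n\, \tr_{g_t}(g_t' g_t')$, which is the source of the dimensional factor $\sqrt{n}$. Second, applying the Cauchy--Schwarz inequality on $(Y, \mu_{g_t})$ to the product $\tr_{g_t}(g_t') \cdot 1$ yields
\begin{equation*}
  \left| \int_Y \tr_{g_t}(g_t')\, d\mu_{g_t} \right| \le \sqrt{V(t)} \left( \int_Y \bigl(\tr_{g_t}(g_t')\bigr)^2 d\mu_{g_t} \right)^{1/2}.
\end{equation*}
Combining these with the fact that $\tr_{g_t}(g_t' g_t') \ge 0$, so that enlarging the domain of integration from $Y$ to all of $M$ only increases the integral and recovers $\| g_t' \|_{g_t}^2$, gives $\bigl| \tfrac{d}{dt}\sqrt{V(t)} \bigr| \le \tfrac{\sqrt{n}}{4}\, \| g_t' \|_{g_t}$.

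Integrating this bound over $[0,1]$ and using the triangle inequality for the integral, I would obtain
\begin{equation*}
  \left| \sqrt{\Vol(Y,g_1)} - \sqrt{\Vol(Y,g_0)} \right| \le \frac{\sqrt{n}}{4} \int_0^1 \| g_t' \|_{g_t}\, dt = \frac{\sqrt{n}}{4}\, L(g_t),
\end{equation*}
and taking the infimum over all paths $g_t$ from $g_0$ to $g_1$ would produce the claimed estimate. I expect the only genuine obstacle to be the bridging step above: the volume is controlled by the trace of the variation, a quantity a priori unrelated to the $L^2$ metric, and the whole argument hinges on dominating it by the $L^2$ integrand with the sharp constant $\sqrt{n}$. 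The remaining analytic points—differentiation under the integral and the degenerate measure-zero case—are routine by comparison.
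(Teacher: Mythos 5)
Your proposal is correct and follows essentially the same route as the paper: differentiate the volume along an arbitrary path, apply H\"older/Cauchy--Schwarz over $Y$, use the pointwise bound $(\tr_{g_t} h)^2 \le n \tr_{g_t}(h^2)$ (which you derive via Cauchy--Schwarz on eigenvalues and the paper derives via the orthogonal decomposition into trace-free and pure-trace parts---the same inequality), enlarge $Y$ to $M$, integrate $\tfrac{d}{dt}\sqrt{V}$, and take the infimum over paths. Your explicit handling of the measure-zero case and of the absolute value via $\bigl|\tfrac{d}{dt}\sqrt{V}\bigr|$ is slightly more careful than the paper's ``interchange $g_0$ and $g_1$'' remark, but the argument is the same.
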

\begin{proof}
  Let $g_t$, $t \in [0,1]$, be any path from $g_0$ to $g_1$, and
  define $h_t := g'_t$.  We compute
  \begin{equation}\label{eq:26}
    \begin{aligned}
      \partial_t \Vol(Y, g_t) &= \partial_t \int_Y \, \mu_{g_t} =
      \int_Y \partial_t \, \mu_{g_t} = \int_Y \frac{1}{2} \tr_{g_t}
      (h_t)
      \, \mu_{g_t} \\
      &\leq \left( \int_Y \, \mu_{g_t} \right)^{1/2} \left(
        \frac{1}{4}
        \int_Y \tr_{g_t}(h_t)^2 \, \mu_{g_t} \right)^{1/2} \\
      &\leq \frac{1}{2} \sqrt{\Vol(Y,g_t)} \left( \int_M
        \tr_{g_t}(h_t)^2 \, \mu_{g_t} \right)^{1/2},
    \end{aligned}
  \end{equation}
  where the first line follows from
  \cite[Lemma~2.38]{clarked):_compl_of_manif_of_rieman}, the second
  line follows from Hölder's inequality, and the last line from the
  nonnegativity of $\tr_{g_t}(h_t)^2$.

  It is not hard to see $(0,2)$-tensors $k$ with $\tr_{g_t} k = 0$ are
  orthogonal to those of the form $\rho \cdot g_t$ with $\rho \in
  C^\infty(M)$.  Therefore, letting $h_t^T$ be the traceless part of
  $h_t$, we have
  \begin{equation*}
    \tr_{g_t}(h_t^2) = \tr_{g_t}\left((h^T_t)^2\right) + \frac{1}{n} \tr_{g_t}(h_t)^2.
  \end{equation*}
  This implies
  \begin{equation*}
    \tr_{g_t}(h_t)^2 = n \left( \tr_{g_t}(h_t^2) -
      \tr_{g_t}\left((h^T_t)^2\right) \right) \leq n \tr_{g_t}(h_t^2),
  \end{equation*}
  since $\tr_{g_t}\left((h^T_t)^2\right) \geq 0$.  Applying this to
  (\ref{eq:26}) gives
  \begin{equation}\label{eq:27}
    \begin{aligned}
      \partial_t \Vol(Y, g_t) &\leq \frac{1}{2} \sqrt{\Vol(Y,g_t)}
      \left( n \int_M \tr_{g_t}(h_t^2) \, \mu_{g_t} \right)^{1/2} \\
      &\leq \frac{\sqrt{n}}{2} \sqrt{\Vol(Y,g_t)} \| h_t \|_{g_t}.
    \end{aligned}
  \end{equation}

  We next compute
  \begin{equation}\label{eq:49}
    \begin{aligned}
      \sqrt{\Vol(Y,g_1)} - \sqrt{\Vol(Y,g_0)} &= \int_0^1 \partial_t
      \sqrt{\Vol(Y,g_t)} \, dt = \int_0^1 \frac{1}{2} \frac{\partial_t
        \Vol(Y,g_t)}{\sqrt{\Vol(Y,g_t)}} \, dt \\
      &\leq \int_0^1 \frac{\sqrt{n}}{4} \| h_t \|_{g_t} \, dt 
      = \frac{\sqrt{n}}{4} L(g_t),
    \end{aligned}
  \end{equation}
  where the inequality follows from (\ref{eq:27}).  Since this holds
  for all paths from $g_0$ to $g_1$, and we can repeat the computation
  with $g_0$ and $g_1$ interchanged, it implies the result
  immediately.
\end{proof}

We note that Lemma \ref{lem:13} in particular gives a positive lower
bound on the distance between two metrics in $\M$ that have different
total volumes---so we must now deal with the case where the two
metrics have the same total volume.

\subsection{A (positive-definite) metric on
  $\M$}\label{sec:another-metric-m}

For the remainder of the paper, we fix an arbitrary reference metric
$g \in \M$.

We begin by defining a function on $\M \times \M$ and showing that it is
indeed a metric.

\begin{dfn}\label{dfn:15}
  Consider $\M_x = \{ \tilde{g} \in \satx \mid \tilde{g} > 0 \}$
  (cf.~Section \ref{sec:manif-riem-metr}).  Define a Riemannian
  metric $\langle \cdot , \cdot \rangle^0$ on $\M_x$ given by
  \begin{equation*}
    \langle h , k \rangle^0_{\tilde{g}} = \tr_{\tilde{g}} (h k) \det
    g(x)^{-1} \tilde{g} \quad \forall h, k \in T_{\tilde{g}} \M_x \cong
    \satx.
  \end{equation*}
  (Recall that $g \in \M$ is our fixed reference element.)  We denote
  by $\theta^g_x$ the Riemannian distance function of $\langle \cdot ,
  \cdot \rangle^0$.
\end{dfn}

Note that $\theta^g_x$ is automatically positive definite, since it is
the distance function of a Riemannian metric on a finite-dimensional
manifold.  By integrating it in $x$, we can pass from a metric on
$\M_x$ to a function on $\M \times \M$ as follows:

\begin{dfn}\label{dfn:16}
  For any measurable $Y \subseteq M$, define a function $\Theta_Y : \M
  \times \M \rightarrow \R$ by
  \begin{equation*}
    \Theta_Y(g_0, g_1) = \integral{Y}{}{\theta^g_x(g_0(x), g_1(x))}{\mu_g(x)}.
  \end{equation*}
\end{dfn}

We have omitted the metric $g$ from the notation for $\Theta_Y$.  The
next lemma justifies this choice.

\begin{lem}\label{lem:28}
  $\Theta_Y$ does not depend on the choice of $g \in \M$ in the above
  definition.  That is, if we choose any other $\tilde{g} \in \M$ and
  define $\langle \cdot , \cdot \rangle^0$ and $\theta^{\tilde{g}}_x$
  with respect to this new reference metric, then
  \begin{equation*}
    \integral{Y}{}{\theta^g_x(g_0(x), g_1(x))}{\mu_g(x)} =
    \integral{Y}{}{\theta^{\tilde{g}}_x(g_0(x), g_1(x))}{\mu_{\tilde{g}}(x)}
  \end{equation*}
\end{lem}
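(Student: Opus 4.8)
The plan is to show that the \emph{density} $\theta^g_x(g_0(x),g_1(x))\,\mu_g(x)$ is independent of the reference metric $g$ \emph{pointwise} in $x$; integrating over $Y$ then gives the stated equality. The whole argument rests on a single observation: passing from reference $g$ to reference $\tilde g$ rescales the metric $\langle\cdot,\cdot\rangle^0$ on the finite-dimensional manifold $\M_x$ by a \emph{constant} (base-point-independent) factor, i.e.\ it is a homothety, and a homothety rescales the induced distance function by the square root of that factor. To keep the notation clear I would write $b$ for the generic base point of $\M_x$ (in place of the $\tilde g$ appearing in Definition \ref{dfn:15}), reserving $\tilde g$ for the alternate reference metric of the lemma.

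First I would compare the two Riemannian metrics on $\M_x$. Denoting by $\langle\cdot,\cdot\rangle^{0,g}$ and $\langle\cdot,\cdot\rangle^{0,\tilde g}$ the metrics built from $g$ and $\tilde g$, at a base point $b\in\M_x$ we have $\langle h,k\rangle^{0,g}_b=\tr_b(hk)\det(g(x)^{-1}b)$ and likewise $\langle h,k\rangle^{0,\tilde g}_b=\tr_b(hk)\det(\tilde g(x)^{-1}b)$. By multiplicativity of the determinant, $\det(g(x)^{-1}b)=\det(g(x)^{-1}\tilde g(x))\,\det(\tilde g(x)^{-1}b)$, so setting $\lambda:=\det(g(x)^{-1}\tilde g(x))$ gives
\[
  \langle\cdot,\cdot\rangle^{0,g}_b=\lambda\,\langle\cdot,\cdot\rangle^{0,\tilde g}_b
  \qquad\text{for all } b\in\M_x .
\]
The crucial point is that $\lambda$ depends only on the fixed data $g(x),\tilde g(x)$ and \emph{not} on $b$, so the two metrics on $\M_x$ are globally homothetic.

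Next I would convert this homothety into a statement about distances. Since $\langle\cdot,\cdot\rangle^{0,g}=\lambda\langle\cdot,\cdot\rangle^{0,\tilde g}$ with $\lambda$ constant, the norm of every tangent vector—and hence the length of every path in $\M_x$—is multiplied by $\sqrt\lambda$; taking infima over paths yields $\theta^g_x=\sqrt\lambda\,\theta^{\tilde g}_x$. Finally I would compute the ratio of the volume forms: in local coordinates $\mu_g(x)=\sqrt{\det g(x)}\,dx$ and $\mu_{\tilde g}(x)=\sqrt{\det\tilde g(x)}\,dx$, so $\mu_g(x)/\mu_{\tilde g}(x)=\sqrt{\det g(x)/\det\tilde g(x)}=\lambda^{-1/2}$. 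Combining the two,
\[
  \theta^g_x(g_0(x),g_1(x))\,\mu_g(x)
  =\sqrt\lambda\,\theta^{\tilde g}_x(g_0(x),g_1(x))\cdot\lambda^{-1/2}\,\mu_{\tilde g}(x)
  =\theta^{\tilde g}_x(g_0(x),g_1(x))\,\mu_{\tilde g}(x),
\]
the factors $\sqrt\lambda$ and $\lambda^{-1/2}$ cancelling exactly. Integrating this identity over $Y$ proves the lemma.

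I do not anticipate a serious obstacle: the content is precisely that the weight $\det(g(x)^{-1}\,\cdot\,)$ in Definition \ref{dfn:15} was engineered so that the distance rescaling ($\sqrt\lambda$) is compensated by the volume rescaling ($\lambda^{-1/2}$). The one point that deserves care is verifying that $\lambda$ is genuinely independent of the base point $b$—this is what makes the change of reference a true homothety rather than a merely conformal change, and it is exactly what guarantees the clean $\sqrt\lambda$ scaling of $\theta^g_x$ (a nonconstant conformal factor would \emph{not} produce such a simple relation between the distance functions).
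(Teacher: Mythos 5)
Your proof is correct and is essentially the same argument as the paper's: both reduce to the pointwise cancellation of the reference-metric determinant between the length integrand and the volume form. The paper simply writes the density out in coordinates as $\bigl(\inf\int_0^1\sqrt{\tr_{g_t(x)}(h_t(x)^2)\,\det g_t(x)/\det g(x)}\,dt\bigr)\sqrt{\det g(x)}\,dx^1\cdots dx^n$ and observes that $\det g(x)$ drops out, which is exactly your $\sqrt{\lambda}\cdot\lambda^{-1/2}=1$ homothety bookkeeping.
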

\begin{proof}
  Let $\tilde{g} \in \M$ be any other metric.  Recall that
  $\theta^g_x$ was the distance function associated to the Riemannian
  metric $\langle \cdot, \cdot \rangle^0$ on $\Matx$, and the metric
  $g$ enters in the definition of this Riemannian metric.  Take a path
  $g_t(x)$ in $\Matx$.  For now, let's put $g$ and $\tilde{g}$ back in
  the notation, so that we can write formulas unambiguously.

  Using the definitions of $\Theta^g_Y$ and $\theta^g_x$, where infima
  are always taken over paths $g_t(x)$ from $g_0(x)$ to $g_1(x)$, and
  where $h_t(x) := g_t(x)'$, we
  can compute:
  \begin{align*}
    \Theta^g_Y(g_0,g_1) &= \int_Y \theta^g_x(g_0(x), g_1(x)) \, \mu_g(x) \\
    &= \int_Y \left( \inf \int_0^1 \sqrt{\tr_{g_t(x)}(h_t(x)^2)
        \frac{\det g_t(x)}{\det g(x)}} \, dt \right) \sqrt{\det g(x)} \, dx^1 \cdots
    dx^n \\
    &= \int_Y \left( \inf \int_0^1 \sqrt{\tr_{g_t(x)}(h_t(x)^2)
        \frac{\det g_t(x)}{\det \tilde{g}(x)}} \, dt \right) \sqrt{\det \tilde{g}(x)} \, dx^1 \cdots
    dx^n \\
    &= \Theta^{\tilde{g}}_Y(g_0,g_1).
  \end{align*}
\end{proof}

\begin{lem}\label{lem:44}
  Let any $Y \subseteq M$ be given.  Then $\Theta_Y$ is a pseudometric
  on $\M$, and $\Theta_M$ is a metric (in the sense of metric spaces).
  
  Furthermore, if $Y_1 \subset Y_2$, then $\Theta_{Y_1}(g_0, g_1)
  \leq \Theta_{Y_2}(g_0, g_1)$ for all $g_0, g_1 \in \M$.
\end{lem}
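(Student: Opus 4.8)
The plan is to obtain every asserted property of $\Theta_Y$ from the corresponding pointwise property of the finite-dimensional distance function $\theta^g_x$ by integrating over $Y$. For each fixed $x \in M$, recall that $\theta^g_x$ is the Riemannian distance function of a genuine (positive-definite) Riemannian metric on the finite-dimensional manifold $\M_x$; as noted after Definition \ref{dfn:15}, it is therefore nonnegative, symmetric, satisfies the triangle inequality, and vanishes exactly on the diagonal. Moreover, $\M_x$ is the convex cone of positive-definite symmetric forms on $T_x M$, so any two of its points are joined by a path and each value $\theta^g_x(g_0(x), g_1(x))$ is finite.

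Before transferring these properties, I would verify that $\Theta_Y$ is well-defined by showing that the integrand $x \mapsto \theta^g_x(g_0(x), g_1(x))$ is continuous on $M$. The Riemannian metric $\langle \cdot , \cdot \rangle^0$ underlying $\theta^g_x$ depends smoothly on $x$, through the reference metric $g(x)$, and the endpoints $g_0(x)$ and $g_1(x)$ vary smoothly with $x$; continuous dependence of a Riemannian distance on its metric and its endpoints then gives continuity of the integrand. Since $M$ is compact, the integrand is bounded, hence integrable, so $\Theta_Y(g_0, g_1)$ is finite for all $g_0, g_1 \in \M$.

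The pseudometric axioms now follow at once. Pointwise, $\theta^g_x(g_0(x), g_0(x)) = 0$ integrates to $\Theta_Y(g_0, g_0) = 0$; symmetry of $\theta^g_x$ integrates to symmetry of $\Theta_Y$; and the pointwise triangle inequality $\theta^g_x(g_0(x), g_2(x)) \le \theta^g_x(g_0(x), g_1(x)) + \theta^g_x(g_1(x), g_2(x))$ integrates to the triangle inequality for $\Theta_Y$. The monotonicity statement is immediate from nonnegativity of the integrand: if $Y_1 \subset Y_2$, enlarging the domain of integration can only increase the integral, so $\Theta_{Y_1} \le \Theta_{Y_2}$.

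It remains to prove that $\Theta_M$ is positive definite. Suppose $\Theta_M(g_0, g_1) = 0$. Since the integrand is nonnegative and continuous and its integral over all of $M$ vanishes, it must vanish identically, so $\theta^g_x(g_0(x), g_1(x)) = 0$ for every $x \in M$. Because each $\theta^g_x$ is a true metric on $\M_x$, this forces $g_0(x) = g_1(x)$ for all $x$, i.e., $g_0 = g_1$. The only genuinely non-formal point in the whole argument --- and the step I expect to require the most care --- is the continuity of $x \mapsto \theta^g_x(g_0(x), g_1(x))$, since it is what makes the integral meaningful and, crucially, what licenses the passage from ``integral zero'' to ``integrand identically zero''; the remaining work is a routine transfer of the metric-space axioms through the integral.
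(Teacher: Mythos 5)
Your proposal is correct and follows the same overall strategy as the paper: every pseudometric axiom is obtained by integrating the corresponding pointwise property of $\theta^g_x$, and positive definiteness of $\Theta_M$ comes from positive definiteness of the pointwise metrics. The one place where you diverge is the positivity argument, and the difference is instructive. You argue in the direction ``integral zero $\Rightarrow$ integrand identically zero,'' which forces you to establish continuity of $x \mapsto \theta^g_x(g_0(x), g_1(x))$ --- precisely the step you yourself flag as the most delicate, since it requires continuous dependence of a Riemannian distance on both the metric and the endpoints. The paper argues the contrapositive: if $g_0 \neq g_1$, then by smoothness of $g_0$ and $g_1$ they differ on an open neighborhood of some point, so the nonnegative integrand is strictly positive on a set of positive measure, and hence its integral is positive. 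This shifts the continuity burden from the distance function $\theta^g_x$ (nontrivial) onto the tensors $g_0, g_1$ (given), and only needs the elementary fact that a nonnegative measurable function positive on a set of positive measure has positive integral. So your argument works but carries an extra analytic obligation that the paper's phrasing avoids; if you keep your version, you should actually supply the continuity proof rather than just sketch it, or else weaken it to ``vanishes almost everywhere'' and then invoke continuity of $g_0 - g_1$ to conclude $g_0 = g_1$, which amounts to the paper's argument.
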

\begin{proof}
  Nonnegativity, vanishing distance for equal elements, symmetry and
  the triangle inequality are clear from the corresponding properties
  for $\theta^g_x$.

  That $\Theta_M$ is positive definite is also not hard to prove.
  Since $\theta^g_x$ is a metric on $\M_x$, $\theta^g_x(g_0(x),
  g_1(x))$ is positive whenever $g_0(x) \neq g_1(x)$.  But since $g_0$ and
  $g_1$ are smooth metrics, if they differ at a point, they differ
  over an open neighborhood of that point.  Hence the integral of
  $\theta^g_x(g_0(x), g_1(x))$ must be positive.

  The second statement follows immediately from nonnegativity of
  $\theta^g_x$.
\end{proof}

\subsection{Proof of the main result}\label{sec:proof-main-result}

We have set up almost everything we need to prove the main result of
this section.  The last preparation comes down to using $\Theta_Y$ to
provide a lower bound for the distance between elements of $\M$ as
measured by $d$.

\begin{prop}\label{prop:20}
  For any $Y \subseteq M$ and $g_0, g_1 \in \M$, we have the following
  inequality:
  \begin{equation*}
    \Theta_Y(g_0, g_1) \leq d(g_0, g_1) \left( \sqrt{n}\, d(g_0, g_1) +
      2 \sqrt{\Vol(M, g_0)} \right).
  \end{equation*}
  In particular, $\Theta_Y$ is a continuous pseudometric (w.r.t.~$d$).
\end{prop}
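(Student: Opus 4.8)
The plan is to exploit the same pointwise structure that enters the definition of $\Theta_Y$. Given any path $g_t$, $t \in [0,1]$, from $g_0$ to $g_1$ in $\M$, for each fixed $x \in M$ the evaluation $t \mapsto g_t(x)$ is a path in $\M_x$ from $g_0(x)$ to $g_1(x)$. Since $\theta^g_x$ is by definition an infimum of $\langle \cdot, \cdot \rangle^0$-lengths over all such pointwise paths, the length of this particular one is an upper bound, so with $h_t := g_t'$ I would write
\[
  \theta^g_x(g_0(x), g_1(x)) \leq \int_0^1 \sqrt{\tr_{g_t(x)}\bigl(h_t(x)^2\bigr)\, \tfrac{\det g_t(x)}{\det g(x)}}\, dt .
\]
Integrating this against $\mu_g = \sqrt{\det g}\, dx^1 \cdots dx^n$ over $Y$, the weight $\det g(x)$ from $\langle \cdot, \cdot \rangle^0$ is cancelled exactly by the $\sqrt{\det g(x)}$ in $\mu_g$, leaving $\sqrt{\det g_t(x)}\, dx = \mu_{g_t}(x)$; after swapping the two (nonnegative) integrations by Tonelli I obtain
\[
  \Theta_Y(g_0, g_1) \leq \int_0^1 \int_Y \sqrt{\tr_{g_t(x)}\bigl(h_t(x)^2\bigr)}\, \mu_{g_t}(x)\, dt .
\]

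Next I would estimate the inner integral by Cauchy--Schwarz in $x$, using $Y \subseteq M$ and the definition of the $L^2$ norm:
\[
  \int_Y \sqrt{\tr_{g_t}(h_t^2)}\, \mu_{g_t} \leq \Vol(Y, g_t)^{1/2} \Bigl( \int_Y \tr_{g_t}(h_t^2)\, \mu_{g_t} \Bigr)^{1/2} \leq \sqrt{\Vol(M, g_t)}\, \| h_t \|_{g_t} .
\]
The volume $\Vol(M, g_t)$ still moves with $t$, and this is exactly where Lemma \ref{lem:13} enters: applied to the subpath from $g_0$ to $g_t$ it gives $\sqrt{\Vol(M, g_t)} \leq \sqrt{\Vol(M, g_0)} + \tfrac{\sqrt{n}}{4} L(g_t)$, where $L(g_t)$ is the length of the full path. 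Pulling this $t$-independent bound out of the integral and using $\int_0^1 \| h_t \|_{g_t}\, dt = L(g_t)$ yields
\[
  \Theta_Y(g_0, g_1) \leq \Bigl( \sqrt{\Vol(M, g_0)} + \tfrac{\sqrt{n}}{4} L(g_t) \Bigr) L(g_t) = L(g_t)\sqrt{\Vol(M, g_0)} + \tfrac{\sqrt{n}}{4} L(g_t)^2 .
\]
Because the left-hand side does not depend on the path and the right-hand side is continuous and increasing in $L(g_t)$, I may take the infimum over all paths and replace $L(g_t)$ by $d(g_0, g_1)$, obtaining the claimed inequality---in fact with the smaller constants $\tfrac{\sqrt{n}}{4}$ and $1$, from which the stated constants $\sqrt{n}$ and $2$ follow a fortiori.

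For the ``in particular'' clause, I would deduce continuity formally from the bound together with the fact (Lemma \ref{lem:44}) that $\Theta_Y$ is a pseudometric. By the triangle inequality,
\[
  \bigl| \Theta_Y(g_0, g_1) - \Theta_Y(g_0', g_1') \bigr| \leq \Theta_Y(g_0, g_0') + \Theta_Y(g_1, g_1') ,
\]
and applying the inequality just proved to each term on the right (with the fixed base points $g_0$, $g_1$, whose volumes stay bounded) shows that both terms tend to $0$ as $d(g_0, g_0') \to 0$ and $d(g_1, g_1') \to 0$. Hence $\Theta_Y$ is continuous with respect to $d$.

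The step I expect to be the main obstacle is the pointwise-to-global passage in the first paragraph: justifying the exchange of the $x$- and $t$-integrations (Tonelli, since everything is nonnegative), verifying that the $\det g(x)$ weight in $\langle \cdot, \cdot \rangle^0$ is annihilated by $\mu_g$ so that the arbitrary reference metric $g$ disappears from the estimate, and---most importantly---feeding in Lemma \ref{lem:13} to trade the moving volume $\Vol(M, g_t)$ for the fixed $\Vol(M, g_0)$ at the cost of only one extra power of the path length. Everything after that is Cauchy--Schwarz and passing to the infimum.
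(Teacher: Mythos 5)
Your proposal is correct and follows essentially the same route as the paper: the pointwise comparison with $\theta^g_x$, the cancellation of the $\det g$ weight against $\mu_g$, the Fubini/Tonelli swap of the $t$- and $x$-integrations, Cauchy--Schwarz in $x$, and Lemma \ref{lem:13} to control $\Vol(M,g_t)$ along the path are exactly the ingredients of the paper's argument, merely run in the opposite direction (bounding $\Theta_Y$ from above rather than $L(g_t)$ from below). Your limiting argument over paths in place of the paper's choice of a path with $L \leq 2d(g_0,g_1)$ even yields the sharper constants $\tfrac{\sqrt{n}}{4}$ and $1$, which of course imply the stated inequality.
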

\begin{proof}
  By Lemma \ref{lem:44}, we need only prove the inequality for $Y =
  M$, and then it follows for any subset.

  We can clearly find a path $g_t$ from $g_0$ to $g_1$ with $L(g_t)
  \leq 2 d(g_0, g_1)$.  Then for any $\tau \in [0,1]$, we get
  \begin{equation*}
    2 d(g_0, g_1) \geq L(g_t) \geq L \left( g_t|_{[0,\tau]} \right) \geq d(g_0,
    g_\tau) \geq \frac{4}{\sqrt{n}} \left| \sqrt{\Vol(M, g_\tau)} -
      \sqrt{\Vol(M, g_0)} \right|,
  \end{equation*}
  where the last inequality is Lemma \ref{lem:13}.  In particular, we
  get
  \begin{equation}\label{eq:48}
    \sqrt{\Vol(M, g_\tau)} \leq \sqrt{\Vol(M, g_0)} +
    \frac{\sqrt{n}}{2} d(g_0, g_1) =: V
  \end{equation}
  for all $\tau \in [0,1]$.

  To find the length of $g_t$, we first integrate $\langle g'_t, g'_t
  \rangle$ over $x \in M$, then take the square root, and finally
  integrate over $t$.  Ideally, we would wish to change the order of
  integration, so that we first integrate over $t$, then over $x$.  We
  cannot do this exactly, but we can bound the computation of the
  length from below by an expression where we integrate in the
  opposite order, and this expression will involve $\theta^g_x$ and
  $\Theta_M$.  So let's see how this works.

  Let $h_t := g'_t$.  From Hölder's inequality,
  \begin{equation*}
    \int_M \sqrt{\tr_{g_t} (h_t^2)} \, d \mu_{g_t} \leq \left( \int_M
      \, d \mu_{g_t} \right)^{1/2} \left( \int_M \tr_{g_t} (h_t^2)
      \, d \mu_{g_t} \right)^{1/2},
  \end{equation*}
  which gives
  \begin{equation}\label{eq:30}
    \begin{aligned}
      \| h_t \|_{g_t} &= \left( \int_M \tr_{g_t} (h_t^2) \, d
        \mu_{g_t} \right)^{1/2} \geq \frac{1}{\sqrt{\Vol(M,g_t)}}
      \int_M
      \sqrt{\tr_{g_t} (h_t^2)} \, d \mu_{g_t} \\
      &\geq \frac{1}{V} \int_M \sqrt{\tr_{g_t} (h_t^2)} \, d
      \mu_{g_t},
    \end{aligned}
  \end{equation}
  where we have also used \eqref{eq:48}.  To remove the
  $t$-dependence from the volume element, we use
  \begin{equation*}
    \mu_{g_t} = \frac{\sqrt{\det g_t}}{\sqrt{\det g}} \mu_{g} =
    \sqrt{\det g^{-1} g_t} \mu_{g}.
  \end{equation*}
  We then rewrite (\ref{eq:30}) as
  \begin{equation}\label{eq:31}
    \| h_t \|_{g_t} \geq \frac{1}{V} \int_M
    \sqrt{\tr_{g_t}(h_t^2) \det g^{-1} g_t} \, \mu_{g} = \frac{1}{V}
    \integral{M}{}{\sqrt{\langle h_t(x) , h_t(x) \rangle^0_{g_t(x)}}}{\mu_g(x)},
  \end{equation}
  where we have used the Riemannian metric $\langle \cdot , \cdot
  \rangle^0$ on $\M_x$ (cf.~Definition \ref{dfn:15}).

  Since we have removed the $t$-dependence from the measure above, we
  can change the order of integration in the calculation of the length
  of $g_t$:
  \begin{equation*}
    \begin{aligned}
      L(g_t) &= \integral{0}{1}{\| h_t \|_{g_t}}{d t} \geq \frac{1}{V}
      \integral{0}{1}{\integral{M}{}{\sqrt{\langle h_t(x),
            h_t(x) \rangle^0_{g_t(x)}}}{\mu_g(x)}}{d t} \\
      &= \frac{1}{V} \integral{M}{}{\integral{0}{1}{\sqrt{\langle
            h_t(x), h_t(x) \rangle^0_{g_t(x)}}}{d t}}{\mu_g(x)}.
    \end{aligned}
  \end{equation*}
  Now we concentrate on the $t$-integral in the expression above.
  Since $g_t(x)$ is a path in $\M_x$ from $g_0(x)$ to $g_1(x)$ with
  tangents $h_t(x)$, the $t$-integral is actually the length of
  $g_t(x)$ with respect to $\langle \cdot , \cdot \rangle^0$.  But by
  definition, this length is bounded from below by $\theta^g_x(g_0(x),
  g_1(x))$.  Therefore, we get the estimate
  \begin{equation*}
    L(g_t) \geq \frac{1}{V} \integral{M}{}{\theta^g_x(g_0(x),
      g_1(x))}{\mu_g(x)} = \frac{1}{V} \Theta_M(g_0, g_1).
  \end{equation*}
  But now the result is immediate given \eqref{eq:48} and the fact
  that we have assumed $L(g_t) \leq 2 d(g_0, g_1)$.
\end{proof}

Because $\Theta_M$ is positive-definite, the previous proposition
immediately implies that the pseudo-metric $d$ is as well.  Thus, we
have the main result of the paper.

\begin{thm}\label{thm:6}
  $(\M, d)$, where $d$ is the distance function induced from the $L^2$
  metric $(\cdot, \cdot)$, is a metric space.
\end{thm}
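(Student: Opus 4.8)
The plan is to show that the induced pseudometric $d$ is in fact positive-definite, which together with the already-established pseudometric properties (nonnegativity, symmetry, the triangle inequality, and vanishing on the diagonal) yields that $(\M, d)$ is a genuine metric space. Everything hinges on producing a lower bound for $d(g_0, g_1)$ that is strictly positive whenever $g_0 \neq g_1$. The key tool for this is Proposition \ref{prop:20}, which I would invoke directly: it states that for all $g_0, g_1 \in \M$,
\begin{equation*}
  \Theta_M(g_0, g_1) \leq d(g_0, g_1) \left( \sqrt{n}\, d(g_0, g_1) + 2 \sqrt{\Vol(M, g_0)} \right).
\end{equation*}
The entire burden of the theorem has thus been front-loaded into this inequality and into the properties of the auxiliary function $\Theta_M$.

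First I would recall from Lemma \ref{lem:44} that $\Theta_M$ is a \emph{metric} (in the sense of metric spaces) on $\M$, and in particular that it is positive-definite: $\Theta_M(g_0, g_1) > 0$ whenever $g_0 \neq g_1$. Then I would argue by contraposition. Suppose $d(g_0, g_1) = 0$. Feeding this into the inequality of Proposition \ref{prop:20}, the right-hand side becomes $0 \cdot (0 + 2\sqrt{\Vol(M, g_0)}) = 0$, forcing $\Theta_M(g_0, g_1) \leq 0$. Combined with the nonnegativity of $\Theta_M$, this gives $\Theta_M(g_0, g_1) = 0$, and since $\Theta_M$ is a genuine metric, we conclude $g_0 = g_1$. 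Therefore $d(g_0, g_1) = 0$ implies $g_0 = g_1$, which is exactly the positive-definiteness that upgrades the pseudometric $d$ to a metric.

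The honest accounting here is that there is no remaining obstacle at the level of the final theorem: the substantive difficulties have all been dispatched in the preceding results. Specifically, the genuinely hard work lies in Proposition \ref{prop:20} (whose proof requires the clever change-of-order-of-integration argument, controlling the length of a path from below by first fixing $x$ and computing the finite-dimensional distance $\theta^g_x$, then integrating over $M$) and in Lemma \ref{lem:13} (the Lipschitz bound on $\sqrt{\Vol}$, which furnishes the uniform volume bound $V$ in \eqref{eq:48} that makes the whole estimate go through). The positive-definiteness of $\Theta_M$ in Lemma \ref{lem:44}, in turn, exploits the smoothness of metrics: if $g_0$ and $g_1$ differ at a point they differ on an open set, so the integral of the pointwise-positive $\theta^g_x$ cannot vanish. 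Given all of these, the final step is purely formal.

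If I wanted to present it as a self-contained closing argument rather than a one-line deduction, I would state it as follows: by the symmetry, triangle inequality, and vanishing-on-the-diagonal already noted for $d$, it remains only to verify positive-definiteness; and this is immediate from Proposition \ref{prop:20} together with the positive-definiteness of $\Theta_M$ established in Lemma \ref{lem:44}, since $d(g_0,g_1)=0$ forces $\Theta_M(g_0,g_1)=0$ and hence $g_0=g_1$. I expect the author's proof to be essentially this short remark, as is already foreshadowed by the sentence immediately preceding the theorem statement.
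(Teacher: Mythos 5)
Your proposal is correct and matches the paper exactly: the author's entire proof of Theorem \ref{thm:6} is the one-sentence remark preceding it, deducing positive-definiteness of $d$ from Proposition \ref{prop:20} together with the positive-definiteness of $\Theta_M$ from Lemma \ref{lem:44}. Your contraposition argument and your accounting of where the real work lives are both accurate.
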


\begin{rmk}\label{rmk:2}
  As already noted, in
  \cite{michor06:_rieman_geomet_spaces_of_plane_curves}, Michor and
  Mumford studied weak Riemannian manifolds where the induced distance
  between all points is zero.  They also showed that the sectional
  curvature is unbounded from above at each point---thus, the
  vanishing of the distance function can be related to the fact that
  the space curls up on itself arbitrarily tightly.  In a sense, our
  theorem confirms this ``mechanism'' for allowing the induced
  distance to vanish, since as we already noted in Section
  \ref{sec:manif-riem-metr}, the sectional curvature of $(\M, (\cdot,
  \cdot))$ is bounded from above---indeed, it is
  nonpositive---preventing the pathologies found by Michor and
  Mumford.
\end{rmk}

Theorem \ref{thm:6} is a first step in investigating the metric
geometry of $\M$---that is, it is certainly a prerequisite for $\M$
itself (rather than the quotient space where elements at distance zero
are identified) to have any metric geometry at all.  In a forthcoming
paper \cite{clarke:_compl_of_manif_of_rieman_metric}, motivated by the
appearance of $(\M, (\cdot, \cdot))$ in Teichmüller theory and the
important results in that field concerning the completion of the
Weil-Petersson metric, we will continue this study by giving a
description of the completion of $(\M, d)$.

\bibliography{main} \bibliographystyle{hamsplain}

\end{document}